\newbox\mybox
\def\overtag#1#2#3{\setbox\mybox\hbox{$#1$}\hbox to
  0pt{\vbox to 0pt{\vglue-#3\vglue-\ht\mybox\hbox to \wd\mybox
      {\hss$\ss#2$\hss}\vss}\hss}\box\mybox}
\def\undertag#1#2#3{\setbox\mybox\hbox{$#1$}\hbox to 0pt{\vbox to
    0pt{\vglue#3\vglue\ht\mybox\hbox to \wd\mybox
      {\hss$\ss#2$\hss}\vss}\hss}\box\mybox}
\def\lefttag#1#2#3{\hbox to 0pt{\vbox to 0pt{\vss\hbox to
      0pt{\hss$\ss#2$\hskip#3}\vss}}#1}
\def\righttag#1#2#3{\hbox to 0pt{\vbox to 0pt{\vss\hbox to
      0pt{\hskip#3$\ss#2$\hss}\vss}}#1}
\let\ss\scriptstyle
\def\Dot{\lower.2pc\hbox to 2.5pt{\hss$\bullet$\hss}}
\def\Circ{\lower.2pc\hbox to 2.5pt{\hss$\circ$\hss}}
\def\Vdots{\raise5pt\hbox{$\vdots$}}
\def\splicediag#1#2{\xymatrix@R=#1pt@C=#2pt@M=0pt@W=0pt@H=0pt}
\newcommand\lineto{\ar@{-}}
\newcommand\dashto{\ar@{--}}
\newcommand\dotto{\ar@{.}}
\newcommand{\Z}{{\mathbb Z}}
\newcommand{\R}{{\mathbb R}}
\newcommand{\co}{\colon}
\DeclareMathOperator{\CAT}{CAT}
\newcommand\catzero{$\CAT(0)$ }
\newtheorem{theorem}{Theorem}[section]
\newtheorem*{theorem*}{Theorem}
\newtheorem{lemma}[theorem]{Lemma}
\newtheorem{proposition}[theorem]{Proposition}
\newtheorem{corollary}[theorem]{Corollary}
\newtheorem*{corollary*}{Corollary}
\theoremstyle{definition}
\newtheorem{definition}[theorem]{Definition}
\newtheorem{example}[theorem]{Example}
\newtheorem{remark}[theorem]{Remark}
\newcommand{\ulp}{\underline{\bf p}}
\newcommand{\ulf}{\underline{\bf f}}
\def\p{{\bf p}}
\def\f{{\bf f}}
\def\TT{\mathcal T}
\def\TTn{\TT_{n}}
\def\coneA{A_{\omega}}
\begin{document}
\title [QI classification of some high dimensional 
right-angled Artin groups]
{Quasi-isometric classification of some\\ high dimensional 
right-angled Artin groups}

\keywords{quasi-isometry, quasi-isometric classification, 
right-angled Artin group, bisimiliarity}
\subjclass[2000]{Primary 20F65, 20F36}%

\author
{Jason A. Behrstock}
\address{Department of Mathematics\\Lehman College, CUNY} 
\email{\href{mailto:jason.behrstock@lehman.cuny.edu}{jason.behrstock@lehman.cuny.edu}}
\author{Tadeusz Januszkiewicz}\address{Department of Mathematics, Ohio
  State University} 
\email{\href{mailto:tjan@math.osu.edu}{tjan@math.osu.edu}}\author{Walter
D.
  Neumann} \address{Department of Mathematics\\Barnard College,
  Columbia University}
\email{\href{mailto:neumann@math.columbia.edu}{neumann@math.columbia.edu}}
\date{today: \today }
\begin{abstract}
In this note we give the quasi-isometry classification for a class of right
  angled Artin groups. In particular, we obtain the first such
  classification for a class of Artin groups with dimension larger
  than 2; our families exist in every dimension.
\end{abstract}

\maketitle

\section{Introduction}
\subsection{Background}

A \emph{right-angled Artin group} is a finitely presented group $G$
which can be described by a finite graph $\Gamma$, the 
\emph{presentation graph},  in the following
way: the vertices of $\Gamma$ are in bijective correspondence with the
generators of $G$ and the defining relations in $G$ consist of a
commuting relation between each pair of generators connected by  an
edge in $G$.
Right-angled Artin groups interpolate between 
free groups (defined by  graphs with no edges) and 
free abelian groups (defined by  complete graphs). 
In between these two extremes, right-angled Artin groups include 
a rich source of interesting groups. In this paper we will describe 
the quasi-isometric classification of a family of such groups.

The two main families of right-angled Artin groups which have been
classified are those whose presentation graphs are trees or
\emph{atomic}. It was proven by Behrstock and Neumann
\cite{BehrstockNeumann:qigraph} that all right-angled Artin groups
which have a presentation graph a tree of diameter greater than two
are quasi-isometric to each other and are not quasi-isometric to any
other right-angled Artin groups; trees of diameter two give the
product of a nonabelian free group with an infinite cyclic group and
these are all quasi-isometric to each other and to no other right
angled Artin group by work of Kapovich and Leeb
\cite{KapovichLeeb:haken}; the tree of
diameter 1 corresponds to $\Z^{2}$, which is not quasi-isometric to
any other
right-angled Artin group. Atomic graphs were
introduced by Bestvina, Kleiner and Sageev; these are connected graphs
with no valence one vertices, no cycles of length less than five, and
no separating closed vertex stars; they proved that right-angled
Artin groups with
presentation graphs that are atomic are quasi-isometric if and only if
the groups have isomorphic presentation graphs
\cite{BestvinaKleinerSageev:RAAG1}. Note that both trees and 
atomic graphs yield Artin groups with cohomological dimension at most 2
since the cohomological dimension of the group 
is the number of vertices of a maximal complete subgraph
(cf.\ \cite{CharneyDavis:Finite}).

The only other family of connected right-angled Artin groups we are
aware of which is completely classified is given by complete graphs;
this follows since $\Z^{n}$ is quasi-isometric to $\Z^{m}$ if and only
if $n=m$. Since all other right-angled Artin groups have 
free subgroups it follows that these groups are not 
quasi-isometric to any other right-angled Artin group.

\subsection{Results}

Define $\mathcal T_n$ to be the smallest class of $n$-dimensional
simplicial complexes satisfying:
\begin{itemize}
\item The $n$--simplex is in $\mathcal T_n$;
\item If $K_1$ and $K_2$ are complexes in $\mathcal T_n$ then the
  union of $K_1$ and $K_2$ along any $(n-1)$--simplex is in $T_n$. 
\end{itemize}
For $n=1$ this is the class of finite trees.  
For $K\in \mathcal T_n$ let $A_K$ denote the
right-angled Artin group whose presentation graph is the
$1$--skeleton of $K$,  we call this a 
\emph{right-angled $n$--tree group}. (Note that $\Z^3$ and
the right-angled $1$--tree groups are exactly the right-angled 
Artin groups which are the fundamental groups of compact 
$3$--manifolds, 
\cite{HermillerMeier}.)
If
$K$ has a vertex that is distance $1$ from all other vertices, then it
is the cone on some $K'\in \mathcal T_{n-1}$ and hence $A_K\cong
\Z\times
A_{K'}$; we say that such an $A_K$ is \emph{reducible}. 

To each $K\in \mathcal T_n$ we associate a tree $\Gamma(K)$ with a 
vertex-coloring
in a way to be described in section \ref{sec:2}. The colors consist of
$n+1$ ``\p--colors'' and one ``\f-color''. In that section we also
describe a ``bisimilarity'' relation, as used in
\cite{BehrstockNeumann:qigraph}, for such trees.

The following is our main result, which is proven 
in sections \ref{sec:3} and \ref{sec:4}. This gives the first
non-trivial 
classification theorem of high dimensional right-angled Artin groups.

\begin{theorem}\label{qintrees} 
  Given $K,K'\in \mathcal T_{n}$. The groups $A_{K}$ and $A_{K'}$ are
  quasi-isometric if and only if $\Gamma(K)$ and $\Gamma(K')$ are
  bisimilar after possibly reordering the \p--colors
by an
  element of the symmetric group on $n+1$ elements.
\end{theorem}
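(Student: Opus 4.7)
The plan is to extend the strategy of \cite{BehrstockNeumann:qigraph} from trees (the case $n=1$) to the higher-dimensional setting. The group $A_K$ acts geometrically on the $\CAT(0)$ cube complex $X$ universally covering its Salvetti complex. The maximal simplices of $K$ correspond to top-dimensional $\Z^{n+1}$-flats in $X$, and two such flats coming from $n$-simplices sharing an $(n-1)$-face meet in a codimension-one $\Z^n$-subflat. The colored tree $\Gamma(K)$ from Section~\ref{sec:2} records this picture: the \f-colored vertices parametrize top flats, the \p-colored vertices parametrize their codimension-one intersections, and the $n+1$ \p-colors distinguish which coordinate of the ambient $\Z^{n+1}$ is absent from a given intersection.

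For the ``if'' direction I would promote a given bisimilarity $\Gamma(K)\sim\Gamma(K')$ to a quasi-isometry $X\to X'$. The bisimilarity matches the local branching of the trees of flats; at each matched pair of top flats one chooses a genuine isometry $\Z^{n+1}\to\Z^{n+1}$ compatible with the isometries already chosen on shared codimension-one subflats, then glues the pieces together. The reordering of \p-colors by an element of $S_{n+1}$ accommodates the fact that the matching of coordinate directions across the bisimilarity is only defined up to such a permutation. Apart from this higher-dimensional bookkeeping, the construction is the tree-of-flats construction of \cite{BehrstockNeumann:qigraph}.

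For the ``only if'' direction I would show that the bisimilarity class of $\Gamma(K)$, modulo $S_{n+1}$-permutation of the \p-colors, is a quasi-isometry invariant of $A_K$. The top-dimensional quasi-flats of $X$ are controlled by the fact that the cohomological dimension equals the clique number $n+1$ of the $1$-skeleton of $K$ (cf.\ \cite{CharneyDavis:Finite}). A quasi-isometry must send top quasi-flats to top quasi-flats and preserve their coarse intersection pattern; reading off the tree $\Gamma(K)$ from this combinatorics, together with the parallelism class of the ``missing'' coordinate direction at each codimension-one intersection (which supplies the \p-color), recovers $\Gamma(K)$ up to the stated symmetric group ambiguity. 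The bisimilarity (and not equality) enters because the local data visible in $X$ only sees the pattern of flats emanating from a given flat up to the equivalence that bisimilarity is designed to capture.

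The main obstacle, and the bulk of the technical work, will be the ``only if'' direction: proving that top-dimensional quasi-flats lie uniformly close to genuine flats in $X$, and that their coarse intersection combinatorics recover $\Gamma(K)$ up to bisimilarity rather than some coarser invariant. Reducible $A_K\cong\Z\times A_{K'}$ (where $K$ is a cone) must also be handled carefully, since every top flat then splits off a common $\R$ factor and the tree-of-flats picture degenerates; this is the analogue of the diameter-two tree case in \cite{BehrstockNeumann:qigraph} and will likely need to be separated out or cleanly absorbed. I expect quasi-flats theory for Salvetti complexes, coarse intersection arguments, and hyperplane combinatorics to supply the necessary tools, with the principal delicacy being faithful tracking of the \p-coloring through every step.
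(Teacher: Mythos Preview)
Your proposal rests on a misreading of what $\Gamma(K)$ encodes, and that misreading propagates into a genuine gap in both directions.

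You describe the \f--vertices as parametrizing top-dimensional flats and the \p--vertices as parametrizing their codimension-one intersections. In fact the \p--vertices are \emph{pieces}: stars in $K$ of $(n-1)$--simplices lying in at least two $n$--simplices, with associated subgroup $F_k\times\Z^n$ (free of rank $k$ times $\Z^n$). The \f--vertices are only those $n$--simplices lying in more than one piece. So the graph-of-groups decomposition is not ``flats glued along codimension-one subflats'' but rather ``(punctured sphere)$\times T^n$ pieces glued along boundary $(n{+}1)$--tori.'' In the universal cover a piece is (tree)$\times\R^n$, not a single flat.

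For the ``if'' direction this matters concretely. Bisimilarity does \emph{not} give a bijection of $n$--simplices: two bisimilar \p--vertices can correspond to pieces with different free ranks $k$, hence to (tree)$\times\R^n$ pieces whose trees have different valences. Your proposed ``genuine isometry $\Z^{n+1}\to\Z^{n+1}$ at each matched pair of top flats'' cannot be carried out, because there is no such matching of flats to begin with. What the paper actually does is match \emph{pieces} and invoke the bilipschitz extension theorem for universal covers of punctured spheres with colored boundary (Theorem~\ref{th:BN}, quoted from \cite{BehrstockNeumann:qigraph}): this is precisely the tool that absorbs the discrepancy in free ranks while keeping boundary behaviour under control, and it is the technical heart of the construction. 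Nothing in your sketch substitutes for it.

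For the ``only if'' direction your instinct---track top quasi-flats and their coarse intersections---is in the right spirit, but the paper works at the level of pieces, not individual flats, and uses asymptotic cones following Kapovich--Leeb \cite{KapovichLeeb:haken}: one shows bilipschitz embeddings of (tree)$\times\R^n$ into the cone must land in pieces, hence quasi-isometries permute pieces and separating flats, giving an isomorphism of the Bass--Serre trees. The \p--color is then recovered not from a single intersection but by walking along a path $v_0,w_1,v_1,\dots,v_r$ in the tree and tracking how the dimension of the iterated coarse intersection of piece \emph{centres} (the $\Z^n$ spines) drops; a new colour appears exactly when the dimension drops. Your ``parallelism class of the missing coordinate'' is morally this, but it only makes sense once you have the piece decomposition and its QI-invariance, which your quasi-flat-in-Salvetti outline does not establish.
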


As an immediate consequence we obtain the following, which generalizes
\cite[Theorem~3.2]{BehrstockNeumann:qigraph}, where the $n=1$ case was
established. We define an element
$K\in\mathcal T_n$ to be \emph{maximally branched} if each
$n$--simplex has
other simplices glued to it either along exactly one $(n-1)$--face or
along all of its $(n-1)$--faces; we say that $A_{K}$ 
is \emph{maximally branched} if $K$ is maximally branched.

\begin{corollary}\label{maxbranchedntreesqi} For any fixed $n$, 
  any two irreducible maximally branched right angle $n$--tree groups
  are quasi-isometric.
\end{corollary}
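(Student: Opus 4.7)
By Theorem \ref{qintrees}, it suffices to prove that for any two irreducible maximally branched $K,K'\in\TT_n$ the colored trees $\Gamma(K)$ and $\Gamma(K')$ are bisimilar after a suitable relabeling of the $\p$--colors by an element of $S_{n+1}$. The plan is therefore to exploit the extreme rigidity imposed by the maximally branched condition to show that, up to color permutation, each $\Gamma(K)$ arising this way realizes the same finite list of local vertex types, from which bisimilarity follows by the standard argument used for atomic trees in \cite{BehrstockNeumann:qigraph}.

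First I would unpack what ``maximally branched'' says at the level of $\Gamma(K)$. Each $n$--simplex $\sigma$ of $K$ contributes vertices to $\Gamma(K)$ whose colors record which of its $n+1$ codimension--$1$ faces participates in gluings, and by hypothesis every such $\sigma$ is either a \emph{leaf simplex}, attached to the rest of $K$ along a single $(n-1)$--face, or a \emph{fully branched simplex}, with some simplex glued along every one of its $n+1$ faces. This dichotomy should translate into a finite alphabet of possible neighborhoods that a vertex of $\Gamma(K)$ can have: each $\p$--colored vertex either has valence $1$ (originating from a leaf simplex) or has the maximum prescribed valence (from a fully branched simplex), and each $\f$--colored vertex records the gluing data between a fully branched simplex and its neighbors. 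Irreducibility rules out the degenerate case in which $K$ is a cone, so in particular $\Gamma(K)$ is not a star and the fully branched locus is nonempty.

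Next I would produce the bisimilarity. Call two vertices of $\Gamma(K)\sqcup\Gamma(K')$ equivalent if they carry the same color (after applying the chosen permutation of the $\p$--colors) and are of the same type in the dichotomy above. Because of the restriction of the local pictures to finitely many possibilities, the relation $R$ defined by equivalence of types is a finite bisimulation: for any vertex $v$ and any neighbor-color $c$, the neighbors of $v$ of color $c$ are interchangeable with those of any $R$--related $v'$, since in both cases the adjacent vertex is forced by the maximally branched hypothesis to be of the unique type that can sit there. One then checks that in any irreducible maximally branched $K$, every type that can occur does occur: at least one fully branched simplex exists by irreducibility, and from such a simplex every local type is reachable by following the recursive $\TT_n$--construction. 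Hence the set of realized types is the same for $K$ and $K'$.

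The main obstacle, and the step requiring the most care, is matching the $\p$--colors by an element of $S_{n+1}$. The colors are not canonical: they depend on a choice of ordering of the vertices of the $n$--simplex used to start the construction of $K$, so the types realized by $\Gamma(K)$ and $\Gamma(K')$ need only agree after a permutation. I would handle this by choosing an initial fully branched simplex in each complex, identifying their vertex sets via the unique $S_{n+1}$--element that matches the colors of the adjacent gluing data, and then showing by induction along the tree of gluings that this choice globally synchronizes the color schemes. Once the colors are aligned the bisimulation above applies verbatim, and Theorem \ref{qintrees} yields the quasi-isometry.
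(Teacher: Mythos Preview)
Your high-level plan (reduce to Theorem \ref{qintrees} and prove bisimilarity of $\Gamma(K)$ and $\Gamma(K')$) is right, but the execution rests on a misreading of $\Gamma(K)$, and this creates a genuine gap. In $\Gamma(K)$ the \p--vertices are \emph{pieces} (stars of $(n-1)$--simplices that bound at least two $n$--simplices), not individual $n$--simplices; the \f--vertices are the $n$--simplices lying in more than one piece. Consequently your dichotomy ``each \p--colored vertex either has valence $1$ (leaf simplex) or the maximum prescribed valence (fully branched simplex)'' is false: a piece may contain any number $\ge 2$ of $n$--simplices, any subset of which can be fully branched, so \p--vertices in a maximally branched $K$ can have arbitrary valence $\ge 1$. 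The ``finite alphabet of local types'' you propose therefore does not exist in the form you describe, and the bisimulation $R$ built from it is not well defined.

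What the paper actually does is a one-line computation of the minimal graph in the bisimilarity class. In the maximally branched case, every \f--vertex (a fully branched $n$--simplex) has all $n+1$ of its faces glued, hence is adjacent to exactly one \p--vertex of each color $1,\dots,n+1$; and irreducibility forces $\Gamma(K)$ to have more than one vertex, so by connectedness every \p--vertex has valence $\ge 1$. The obvious collapsing map---send all \f--vertices to a single \f--vertex and all \p--vertices of color $i$ to a single \p$i$--vertex---is then a weak covering onto the star consisting of one central \f--vertex joined to $n+1$ \p--vertices, one of each color. This star is visibly minimal, so it is the common minimal element for every irreducible maximally branched $K$, and bisimilarity follows from Proposition \ref{prop:bisimeq}. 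Note also that your careful $S_{n+1}$--matching paragraph is unnecessary: the star is invariant under any permutation of the \p--colors, so no alignment is needed.
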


A consequence of Theorem~\ref{qintrees} together with a theorem of 
Papasoglu and Whyte concerning quasi-isometric invariance of 
free product decompositions \cite[Theorem~0.4]{PapasogluWhyte:ends} 
is the following:

\begin{corollary} Let $\mathcal K=\{K_{1},\ldots, K_{n}\}$ and 
    $\mathcal K'=\{K'_{1},\ldots, K'_{m}\}$ be finite sets 
    of elements with $K_{i}\in \mathcal T_{n(i)}$ and $K'_{j}\in \mathcal T_{n(j)}$.
    Let $A_{\mathcal K}$ 
    be the right-angled Artin group whose presentation graph is the 
    disjoint union of the 1--skeleton of the $K_{i}$, 
    define $A_{\mathcal K'}$ similarly. 
    
    Then the group $A_{\mathcal 
    K}$  is quasi-isometric to $A_{\mathcal K'}$ if and only if for 
    each $K_{i}$ there exists $j$ with $n(i)=n(j)$ and 
    $\Gamma(K_{i})$ bisimilar to $\Gamma(K'_{j})$ and for each $K'_{p}$ 
    there exists $K_{q}$ with $n(p)=n(q)$ and 
    $\Gamma(K'_{p})$ bisimilar to $\Gamma(K_{q})$.
\end{corollary}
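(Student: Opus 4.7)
The plan is to combine the free-product structure of the two groups with the Papasoglu--Whyte theorem cited in the statement, and then apply Theorem~\ref{qintrees} factor by factor.

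First, since the presentation graph of each group is by hypothesis a disjoint union, we obtain the free-product decompositions
\[
A_{\mathcal K}\cong A_{K_1}*\cdots *A_{K_n},\qquad A_{\mathcal K'}\cong A_{K'_1}*\cdots *A_{K'_m}.
\]
For each $n(i)\ge 1$ the $1$-skeleton of $K_i$ is connected, has at least two vertices, and contains at least one edge, so $A_{K_i}$ is one-ended and freely indecomposable. These are then the one-ended vertex groups of the Grushko decomposition, with any $n(i)=0$ factor contributing a $\Z$ summand. By \cite[Theorem~0.4]{PapasogluWhyte:ends}, it follows that $A_{\mathcal K}$ is quasi-isometric to $A_{\mathcal K'}$ if and only if there is a bijection between the collections of one-ended factors $\{A_{K_i}\}$ and $\{A_{K'_j}\}$ pairing up quasi-isometric groups (with the numbers of $\Z$-factors agreeing, if any are present).

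It then suffices to characterize when $A_{K_i}$ is quasi-isometric to $A_{K'_j}$. Since $A_{K_i}$ contains a flat $\Z^{n(i)+1}$ coming from a top-dimensional simplex of $K_i$ but no flat of higher rank, and since maximal flat rank is a quasi-isometry invariant for these \catzero cube complex groups, such a quasi-isometry forces $n(i)=n(j)$. Once the dimensions agree, Theorem~\ref{qintrees} identifies the quasi-isometry with bisimilarity of $\Gamma(K_i)$ and $\Gamma(K'_j)$ modulo a permutation of the \p--colors. Assembling these factor-by-factor equivalences yields exactly the stated condition.

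The main points to verify are (i) one-endedness and free indecomposability of each nontrivial $A_{K_i}$, so that Papasoglu--Whyte applies, and (ii) the quasi-isometry invariance of the dimension $n$ across the different classes $\mathcal T_n$. Both are essentially standard: (i) is a general fact about RAAGs on connected presentation graphs without isolated vertices, and (ii) follows from quasi-flat rigidity in \catzero cube complexes. Beyond these, the argument is a formal reduction to Theorem~\ref{qintrees}, and no genuine new obstacle arises.
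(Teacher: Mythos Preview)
Your approach is exactly the one the paper indicates: it states the corollary as a direct consequence of Theorem~\ref{qintrees} together with \cite[Theorem~0.4]{PapasogluWhyte:ends} and gives no further argument, so your reduction via the free-product decomposition and a factor-by-factor appeal to Theorem~\ref{qintrees} is the intended proof.

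One correction is worth making. The Papasoglu--Whyte criterion is not that there be a \emph{bijection} between the one-ended free factors pairing quasi-isometric groups; it is only that the two \emph{sets} of quasi-isometry types of one-ended factors coincide (so for instance $A*A*B$ is quasi-isometric to $A*B$ when $A,B$ are one-ended). This is precisely the ``for each $K_i$ there exists $j$ \dots\ and for each $K'_p$ there exists $q$ \dots'' form in which the corollary is stated, so once you replace ``bijection'' by ``equality of sets of quasi-isometry classes'' your argument goes through verbatim. Your aside about $\Z$-factors is also unnecessary here: every $K_i\in\mathcal T_{n(i)}$ has $n(i)\ge 1$, so each $A_{K_i}$ contains $\Z^{2}$ and is one-ended.
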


In a previous paper, we showed
that in the case where all the $K_{i}$ and $K_{j}$ are simplicies, 
then the quasi-isometric classification of free products agrees with 
the commensurability classifications
\cite{BehrstockNeumannJanuszkiewicz:freeproductsabelian}. Already in 
the class of groups $\{A_K:K\in\mathcal T_{1}\}$ are infinite families of  
quasi-isometric, but pairwise non-commensurable groups. A question 
that remains open is to find the commensurabilty classification of 
the groups discussed here. 
In the remainder of the paper, unless we specify otherwise, we will only 
consider connected presentation graphs.

\subsection*{Acknowledgements} We thank the anonymous referees for 
useful comments; in particular for the suggestion of adding 
Remark~\ref{remark:labels}.

\section{Preliminaries}\label{sec:2}

\subsection{Geometric models}

We describe the geometric models that we will work with.  Fix a
complex $K\in \mathcal T_n$. 
We define a
\emph{piece} to be the star in $K$ of an $(n-1)$--simplex of $K$
which is the boundary of at least 2 $n$--simplices.  Let
$P$ denote a piece of $K$.  Then, $P$ consists of a finite collection
of $n$--simplices attached along the common $(n-1)$--simplex, i.e.,
the join of the $(n-1)$--simplex with a finite set of points
$p_1,\dots,p_k$.  The Artin group $A_P$ is thus the product of a free
group of rank $k$ with $\Z^{n}$.  Giving the free group the redundant
presentation
$$\langle p_0,p_1,\dots,p_k:p_0p_1\dots p_k=1\rangle$$
allows us to naturally think of it as the fundamental group of a
$(k+1)$--punctured sphere $S_{k+1}$.  Hence, $A_P$ is the fundamental
group of $M=S_{k+1}\times T^n$, with the $k$ $n$--simplices of $P$
representing the fundamental groups of $k$ of the $k+1$ boundary
components.

When two pieces $P$ and $P'$ of $K$ intersect in an $n$--simplex this
corresponds to gluing the corresponding manifolds, $M$ and $M'$, along
a boundary component by a \emph{flip} --- a map that switches the base
coordinate of one piece with one of the $S^1$ factors of the torus
fiber of the other piece.  Since the torus has $n+1$ factors $S^1$,
there are $n$ possible flips we can use for such a gluing.  
In this way we associate to any complex $K\in
\mathcal T_n$ a space $X_K$ with fundamental group $A_{K}$ which is a
manifold away from a certain ``branch locus''.  This branch locus
consists of the collection of $(n+1)$--tori corresponding to
$n$--simplices in $K$ which are contained in more than two
pieces. Note that for $n=1$ the branch locus is always empty, whereas
for $n>1$ it is empty if and only if every $n$--simplex is contained
in at most two pieces: such \emph{minimally branched} complexes yield 
a family of ``high dimensional graph manifolds'' (i.e., manifolds glued 
from trivial bundles of tori over compact surfaces with boundary) 
which are quasi-isometrically classified as a special case of 
Theorem~\ref{qintrees}.

We call the decomposition of $X_K$ into its pieces
the \emph{geometric decomposition}. There is a corresponding
graph-of-groups
decomposition of $A_K$ with two kinds of vertex groups, the
fundamental
groups of the pieces and the fundamental groups of the separating
tori; the edge groups are copies of the fundamental groups of the
separating tori, one copy for each geometric piece that the torus
bounds.

\begin{remark} For a complex $K\in
  \mathcal T_n$ and for any piece $P$ as above, $A_P$ is
  quasi-isometrically embedded in $A_{K}$. This holds since there
  exists a retraction from $A_{K}$ to $A_{P}$, cf.\ \cite[Proposition
  10.4]{BehrstockDrutuMosher:thick}. (This is more generally true for
  any full subcomplex.)
\end{remark}

\subsection{Labelled graphs}\label{sec: Labelled graphs}

To each $K\in \mathcal T_n$ we will associate a labelled bipartite
tree, $\Gamma(K)$, whose underlying graph is the graph of the
graph-of-groups decomposition of $A_K$ described above.

To each piece in $K$ we assign a vertex labelled \p\ (for \ulp
iece). To each of the $n$--simplices 
which is in more than one piece we assign a vertex labelled \f\
(for \ulf ace). Each \f--vertex is connected by an edge to each of
the \p--vertices which corresponds to a piece containing the
$n$--simplex.

Since for any $K\in \mathcal T_n$ there is simplicial map to an 
$n$--dimensional simplex $\Delta$, which is unique up to permutation 
of $\Delta$, it follows that labelling the vertices of $\Delta$ by 
$1$ to $n+1$ pulls back to a consistent labelling on all the vertices 
of $K$. 
Note that in any piece all 
the vertices of their common $(n-1)$--simplex (the ``spine'' of the
piece) are given the same label.
We label each \p--vertex by the index of the $n$-simplex vertex
which is not on the spine of the corresponding piece. Hence the label 
set for the \p--vertices are the elements of 
the set $\{1,\dots,n+1\}$. The
possible labels for vertices are thus \p1, \p2, \dots, \p$(n+1)$ and
\f,
for a total of $n+2$ possible labels.

The \p/\f--distinction gives a bipartite structure on our tree
$\Gamma(K)$. The \p--vertices to which a given \f--vertex is connected
have distinct labels, so a \f--vertex has valence at most $n+1$ (and
at least $2$). A \p--vertex can be connected to any number of
\f--vertices.

\subsection{Bisimilarity}

\begin{definition}\label{def:color}
  A \emph{graph} $\Gamma$ consists of a vertex set $V(\Gamma)$ and an
  edge set $E(\Gamma)$ with a map $\epsilon\co E(\Gamma)\to
  V(\Gamma)^2/\Z_2$ to the set of unordered pairs of
  elements of $V(\Gamma)$.

 A \emph{colored graph} is a graph $\Gamma$, a set $C$, and a ``vertex
coloring'' 
$c\co V(\Gamma) \to C.$

  A \emph{weak covering} of colored graphs is a graph homomorphism
  $f\co \Gamma \to \Gamma' $ which respects colors and has the
  property: for each $v\in V(\Gamma)$ and for each edge $e'\in
  E(\Gamma')$ at $f(v)$ there exists an $e\in E(\Gamma)$ at $v$ with
  $f(e)=e'$.
\end{definition}

Henceforth, we assume all graphs we consider to be
connected. It is easy to see that a weak covering is then
surjective. 

\begin{definition}\label{def:bisimilar}
  Colored graphs $\Gamma_1,\Gamma_2$ are
  \emph{bisimilar}, written $\Gamma_1\sim\Gamma_2$, if $\Gamma_1$ and
  $\Gamma_2$ weakly cover some common colored graph.
\end{definition}

Our applications of bisimilarity rely on the following.

\begin{proposition}[\cite{BehrstockNeumann:qigraph}]\label{prop:bisimeq}
  The bisimilarity relation $\sim$ is an equivalence relation,
  and each equivalence class has a unique minimal element up to
  isomorphism.\qed
\end{proposition}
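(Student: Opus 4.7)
The plan is to reformulate bisimilarity in terms of bisimulation relations in the sense of Park/Milner, from which both assertions follow by familiar manipulations. Define a \emph{bisimulation} between colored graphs $\Gamma_1$ and $\Gamma_2$ to be a color-preserving relation $R\subseteq V(\Gamma_1)\times V(\Gamma_2)$ with the back-and-forth property: whenever $(v_1,v_2)\in R$ and $v_1$ is adjacent to some $w_1$ in $\Gamma_1$, there exists $w_2$ adjacent to $v_2$ in $\Gamma_2$ with $(w_1,w_2)\in R$, and symmetrically. The first step is to show that $\Gamma_1\sim\Gamma_2$ if and only if there is a bisimulation between them whose two projections onto $V(\Gamma_i)$ are surjective. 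The forward direction uses the pullback $R=\{(v_1,v_2):f_1(v_1)=f_2(v_2)\}$ over a common cover $f_i\co\Gamma_i\to\Gamma$; the back-and-forth property is precisely the edge-lifting built into the definition of weak covering. For the reverse direction, the common target is the quotient of $\Gamma_1\sqcup\Gamma_2$ by the smallest equivalence relation containing $R$, with induced edges and colors; the projections are then weak covers because of back-and-forth.

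With this characterization in hand, reflexivity (take $R=\mathrm{id}$) and symmetry (transpose $R$) of $\sim$ are immediate. Transitivity reduces to the elementary fact that the set-theoretic composition $R_{23}\circ R_{12}$ of two bisimulations is a bisimulation; surjectivity of the projections is inherited from its factors.

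For the minimal element, I apply partition refinement to any representative $\Gamma$. Arbitrary unions of bisimulations are bisimulations, so the union $\equiv$ of all auto-bisimulations on $V(\Gamma)\times V(\Gamma)$ is the maximal such bisimulation; the three properties above applied internally show $\equiv$ is in fact an equivalence relation. The quotient $\bar\Gamma:=\Gamma/{\equiv}$ is bisimilar to $\Gamma$ via the quotient map and, by maximality of $\equiv$, admits no nontrivial auto-bisimulation, so cannot be further reduced. If $\bar\Gamma'$ is another such minimal representative and $R$ is a bisimulation between them with surjective projections, then $R^{-1}\circ R$ is an auto-bisimulation of $\bar\Gamma$, hence the diagonal by minimality; similarly $R\circ R^{-1}$ is the diagonal of $\bar\Gamma'$. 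Together these force $R$ to be the graph of a color-preserving graph isomorphism.

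The main obstacle I anticipate is the backward direction of the first step, where one must build a quotient colored graph from a bisimulation and verify that both projections satisfy the edge-lifting property of Definition~\ref{def:color}. The technical point is that edges in $\Gamma_1$ and edges in $\Gamma_2$ lying over the same pair of vertex classes of the quotient must be identified consistently, and one uses back-and-forth to produce the required lifts on each side. Once this correspondence is set up cleanly, the remaining steps are essentially formal.
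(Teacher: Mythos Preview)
Your approach is correct. The paper does not itself prove this proposition; it is quoted from \cite{BehrstockNeumann:qigraph}. The argument there (a version of which sits in the commented-out block immediately following the proposition in the source you were given) is organized around a different key lemma: if a single colored graph $\Gamma$ weakly covers each member of a family $\{\Gamma_i\}$, then all the $\Gamma_i$ weakly cover some common $\Gamma'$. After collapsing multiple edges, a weak cover $\Gamma\to\Gamma_i$ is encoded by an equivalence relation on $V(\Gamma)$ with an edge-transport property, and one checks that the equivalence relation generated by several such relations again has that property; transitivity of $\sim$ and the existence of a unique minimal quotient follow. Your Park--Milner reformulation is a genuinely different packaging of the same closure phenomenon: you work with relations \emph{between} graphs rather than with equivalence relations on a common covering source, and transitivity comes from closure of bisimulations under relational composition rather than from closure of edge-transport equivalences under generation. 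Your route buys a particularly clean uniqueness argument via the $R^{-1}\circ R$ trick and connects to the standard process-algebra toolkit; the cited approach is slightly more hands-on and sidesteps the step you flag as the main obstacle, since there the quotient is always taken on a graph already given as a common source.
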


The following also holds, with a proof as in
\cite{BehrstockNeumann:qigraph} .

\begin{proposition} \label{prop:bicolor tree} If we restrict to
  connected bipartite colored graphs of the type in the previous
  subsection (\p/\f--bipartite, and the \p--vertices attached to an
  \f--vertex have distinct colors from the set $\{1,\dots,n+1\}$),
  which are countable but may be infinite, then each bisimilarity
  class contains a tree $T$, unique up to isomorphism, which weakly
  covers every element of the class.  It can be constructed as
  follows: If\/ $\Gamma$ is in the bisimilarity class, duplicate every
  \f--vertex and its adjacent edges a countable infinity of times, and
  then take the universal cover of the result (in the topological
  sense).\qed
\end{proposition}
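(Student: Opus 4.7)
The plan is to verify three things about the tree $T$ constructed by duplicating each \f--vertex of some $\Gamma$ in the class countably many times (keeping all incident edges) to form a graph $\Gamma^*$ and then taking the topological universal cover of $\Gamma^*$: that $T$ lies in the bisimilarity class of $\Gamma$, that $T$ weakly covers every member of that class, and that $T$ is unique up to isomorphism (independent of the initial $\Gamma$). The argument mirrors \cite{BehrstockNeumann:qigraph}, with the \p--labels carried through.

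First I would show $T$ is in the class of $\Gamma$. The collapsing map $\Gamma^* \to \Gamma$ that sends all duplicates of each \f--vertex to the original is color-preserving, and it is a weak covering: at a duplicated \f--vertex its \p--edges are present unchanged, and at each \p--vertex every original edge to an \f--neighbor has lifts to any duplicate. The topological universal cover $T \to \Gamma^*$ is locally bijective on edge-stars, hence a weak covering. Composition gives a weak cover $T \to \Gamma$, so $T \sim \Gamma$.

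Next, given an arbitrary $\Gamma'$ in the class, I would use Proposition~\ref{prop:bisimeq} to fix a common weak-cover target $\Gamma_0$ for $\Gamma$ and $\Gamma'$ (so $T$ also weakly covers $\Gamma_0$ through $\Gamma$). To build $f\co T \to \Gamma'$, pick a basepoint $v_0 \in T$, set $f(v_0)$ to be any preimage under $\Gamma'\to\Gamma_0$ of the image of $v_0$, and extend inductively: for each new edge of $T$ with image edge in $\Gamma_0$, lift to an edge at the already-assigned image in $\Gamma'$, which exists by the weak covering $\Gamma'\to\Gamma_0$. Consistency is automatic because $T$ is a tree. The verification that $f$ is itself a weak covering is the crux: at any \f--vertex $v \in T$ the color set of its \p--neighbors matches that at $f(v)$ automatically (weak covers preserve this set), but at a \p--vertex $v$ one must arrange that every \f--neighbor of $f(v)$ in $\Gamma'$ is the image of some neighbor of $v$. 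This is where countable duplication is essential: in $T$, each \p--vertex has countably many \f--neighbors of each ``$\Gamma_0$--type'', and a back-and-forth enumeration interleaved with the inductive extension distributes them surjectively onto the (at most countably many) \f--neighbors of $f(v)$.

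For uniqueness, suppose $T_1$ and $T_2$ both weakly cover every graph in the class. They share the same local profile: at every \f--vertex a single \p--neighbor of each admissible color, and at every \p--vertex countably many \f--neighbors of each admissible type. Starting from basepoints of matching type, a back-and-forth construction (using the tree structure to avoid cycles) produces a color-preserving isomorphism $T_1 \cong T_2$. The main obstacle throughout is the surjection-of-neighborhoods step in the previous paragraph; this is exactly the content of the duplication step, and without it, finite-valence \p--vertices in $T$ could not weakly cover \p--vertices of arbitrarily large valence elsewhere in the class.
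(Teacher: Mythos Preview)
Your proposal is correct and follows precisely the route the paper indicates: the paper does not give its own argument here but defers entirely to \cite{BehrstockNeumann:qigraph}, and your sketch reproduces that argument (duplication to get infinite valence at \p--vertices, universal cover for the tree, inductive lifting through the minimal graph $\Gamma_0$ with a back-and-forth to force surjectivity on stars, and a second back-and-forth for uniqueness). The only place worth tightening is the uniqueness step: you should note explicitly that any tree $T_1$ in the class which weakly covers everything must in particular weakly cover the constructed $T$, which forces each \p--vertex of $T_1$ to have countably many \f--neighbors of every $\Gamma_0$--type, so the ``same local profile'' claim is justified rather than assumed.
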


\subsection{Examples}

\begin{example}In Figure~\ref{Figure 1} we give three minimally 
    branched complexes 
    $K_{1},K_{2},K_{3}\in \mathcal T_{2}$ and their associated 
    labelled graphs $\Gamma(K_{i})$. Notice that $\Gamma(K_{1})$ is 
    easily checked to be 
    minimal. It is also easy to check that 
    $\Gamma(K_{2})$ weakly covers $\Gamma(K_{1})$ 
    by sending both the $\p2$ vertices together and the $\p1$ 
    vertices together and hence $\Gamma(K_{1})$ is the minimal graph 
    in the bisimilarity class of $\Gamma(K_{2})$. 
    On the other hand, the graph $\Gamma(K_{3})$ 
    is minimal and hence not bisimilar to either of the other two 
    graphs. See \cite{BehrstockNeumann:qigraph} for an algorithm 
    to determine minimality.
\end{example}

\begin{figure}[ht]
    \centering
\includegraphics[width=.7\hsize]{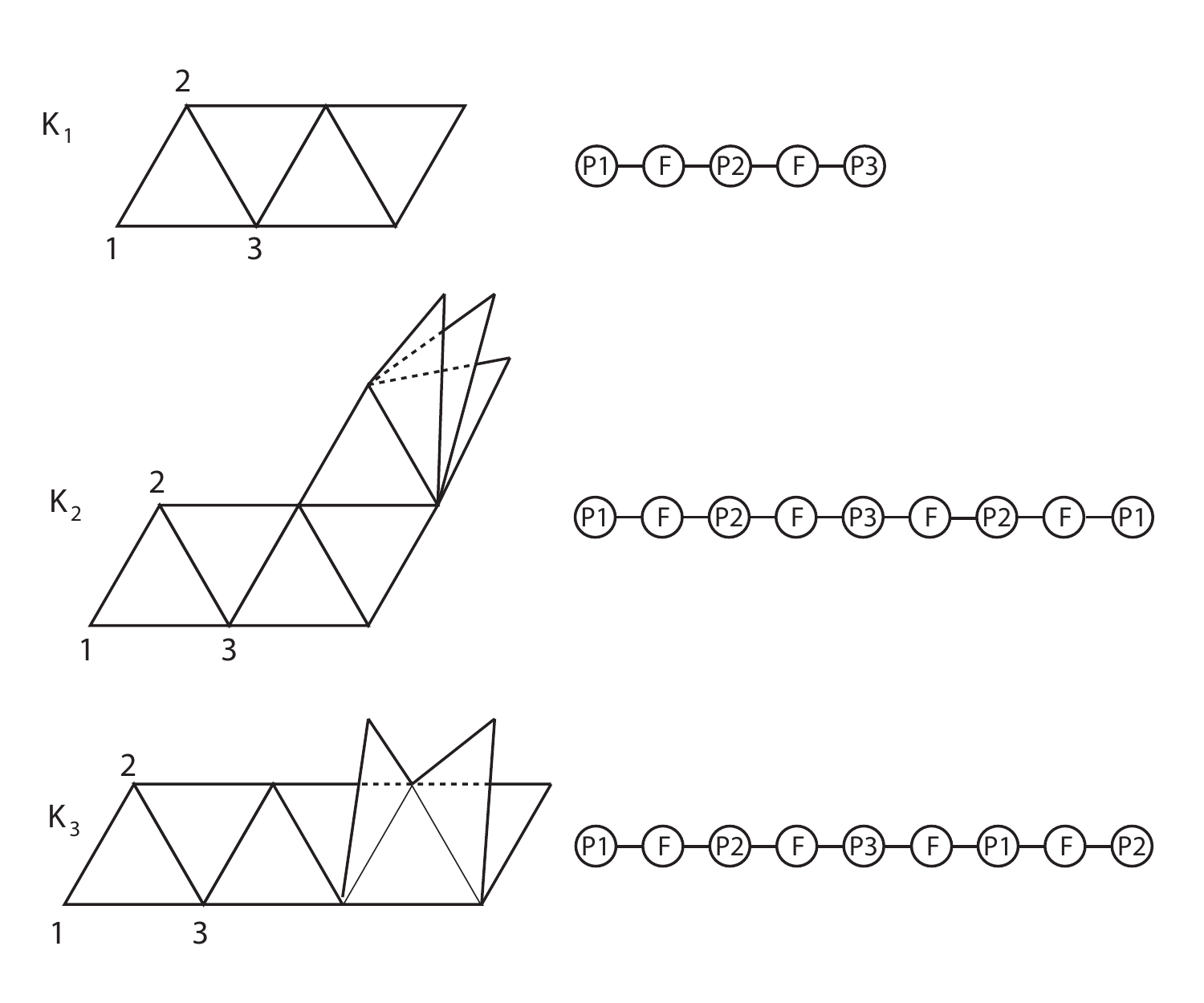}
    \caption{Three minimally branched complexes in $\mathcal T_{2}$ and their associated 
    labelled graphs.}
    \label{Figure 1}
  \end{figure}

  \begin{example}\label{example:max} 
      In Figure~\ref{Figure 2} we give two examples of 
      maximally branched complexes 
      $B_{1},B_{2}\in \mathcal T_{2}$ and their associated 
      labelled graphs $\Gamma(B_{i})$. 
      One can check by hand that 
      $\Gamma(B_{2})$ weakly covers $\Gamma(B_{1})$. In fact 
      $\Gamma(B_{1})$ is the minimal graph in this bisimilarity class. 
      Corollary \ref{maxbranchedntreesqi} generalizes this example.
  \end{example}

  \begin{figure}[ht]
      \centering
  \includegraphics[width=.7\hsize]{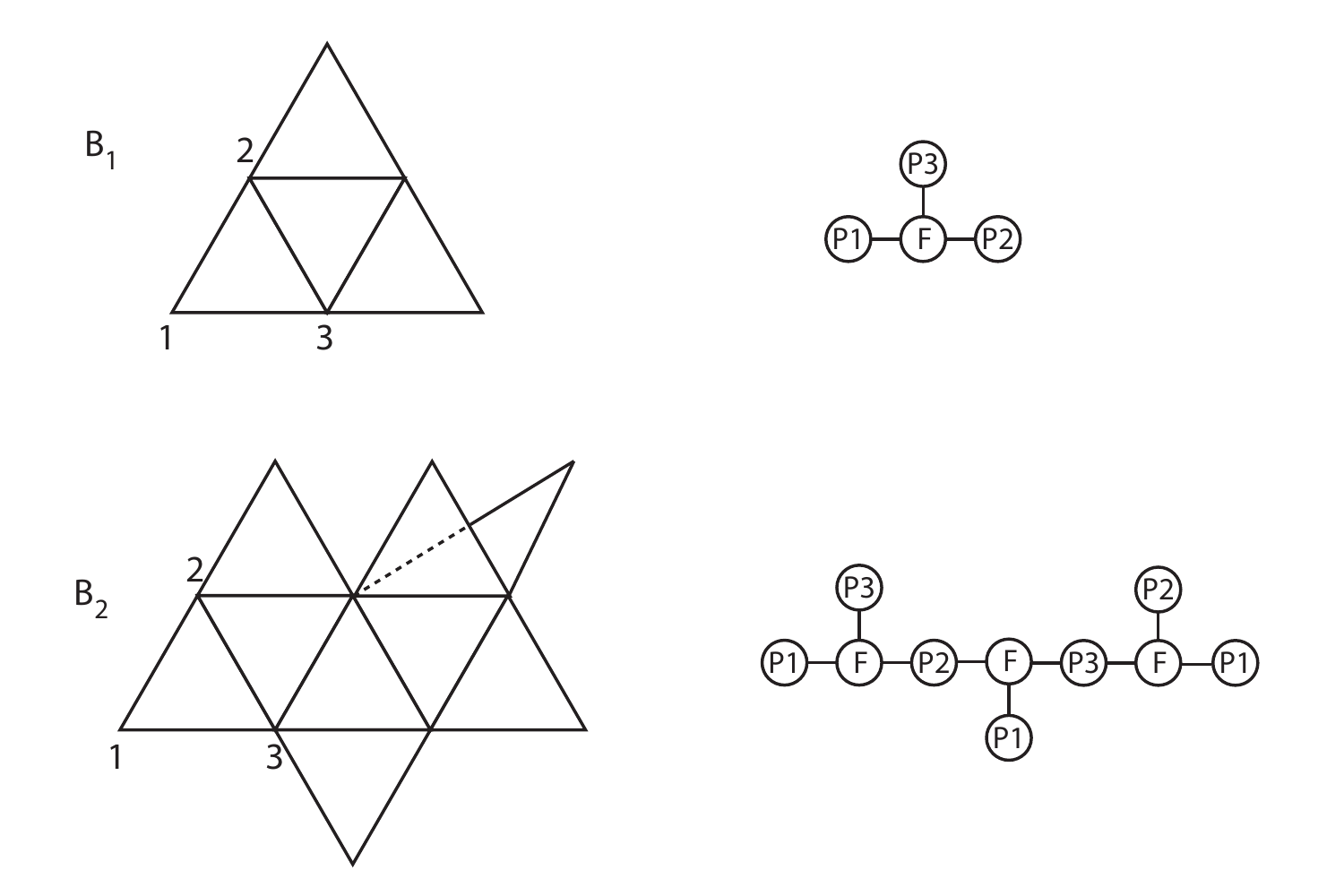}
      \caption{Two maximally branched complexes in $\mathcal T_{2}$ 
      and their associated labelled graphs.}
      \label{Figure 2}
    \end{figure}

    \begin{proof}[Proof of Corollary \ref{maxbranchedntreesqi}] The 
	minimal graph associated to an irreducible maximally branched 
	right-angled $n$--tree group is a star consisting of a single central
	  \f--vertex connected to $n+1$ \p--vertices, one of each color. 
    \end{proof}

    \begin{example} For each $n\geq 2$, any pair of complexes 
	$K, K'\in \mathcal T_{n}$ which use only two $\p$--colors 
	yield quasi-isometric groups and the minimal such graph,  
	up to permutation of labels, corresponds to a 
	graph of the form $\p1$---$\f$---$\p2$. 
	Any such group is reducible; 
	more generally, a group corresponding to a complex 
	$K\in\mathcal T_{n}$ is reducible if and only if its 
	graph uses less than $n+1$ $\p$--colors.
	See 
	Figure~\ref{Figure 3} for an example of a group in the 
	quasi-isometry class of such a $K\in \mathcal T_{2}$, but 
	whose graph is not minimal. 
	
	The number of minimal graphs with $k$ $\p$--vertices chosen 
	from a set of $3$ $\p$--colors grows with $k$. For $k=3$ 
	there are two minimal graphs, for $k=4$ there are three, 
	for $k=5$ there are twelve such graphs, and for $k=6$ there 
	are forty-five. 
	
	Note there are two
	quasi-isometry types corresponding to graphs with one $\p$--vertex, 
	one when the piece is just a simplex and the other when the piece 
	is built from more than one simplex, and just one minimal 
	graph with two $\p$--vertices, this is the 
	example given in Figure~\ref{Figure 3} 
	whose minimal graph is $\p1$---$\f$---$\p2$.
	Hence, there are exactly 65 quasi-isometry types of right-angled 
	$2$--tree groups built from $6$ or less pieces.
    \end{example}
    
    \begin{figure}[ht]
	\centering
    \includegraphics[width=.7\hsize]{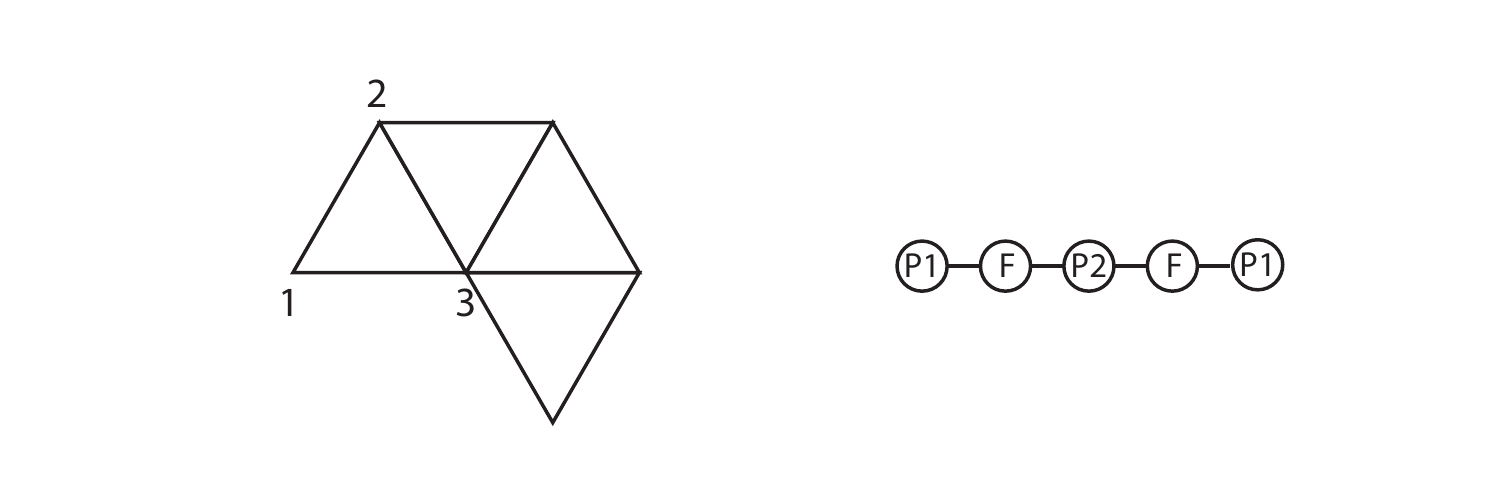}
	\caption{A complex in 
	$\mathcal T_{2}$ built from three 
	pieces and its associated labelled graph.}
	\label{Figure 3}
      \end{figure}

\section{Bisimilar implies quasi-isometric}\label{sec:3}

The following will prove the ``if'' direction of
Theorem~\ref{qintrees}.

\begin{theorem}\label{ntrees bisim implies qi}
    Fix $K,K'\in\mathcal T_n$. If the graphs 
    $\Gamma(K)$ and $\Gamma(K')$ are bisimilar, then $A_{K}$ and 
    $A_{K'}$ are quasi-isometric.
\end{theorem}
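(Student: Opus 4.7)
The plan is to adapt the $n=1$ argument of~\cite{BehrstockNeumann:qigraph} to higher dimensions. First, by Proposition~\ref{prop:bisimeq}, bisimilarity is the equivalence relation generated by weak coverings, and every finite $\p/\f$-bipartite tree satisfying the constraints of Section~\ref{sec: Labelled graphs} is realized as $\Gamma(K'')$ for some $K''\in\TTn$ (inductively build a piece around an $(n-1)$-simplex whose opposite vertex carries the designated $\p$-color, then glue pieces across $n$-simplices whenever the corresponding $\p$-vertices share an $\f$-neighbor). Combined with transitivity of quasi-isometry, it therefore suffices to prove: whenever $\Gamma(K')$ weakly covers $\Gamma(K)$, the groups $A_{K'}$ and $A_K$ are quasi-isometric.

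For this, I will work with the geometric models of Section~\ref{sec:2}. The universal cover $\tilde X_K$ is a tree of spaces over the Bass-Serre tree of $A_K$, whose quotient is the underlying graph of $\Gamma(K)$: $\p$-vertex spaces are quasi-isometric to $T_k\times\R^n$, with $T_k$ a regular tree whose valence is determined by the number of top-dimensional simplices in the corresponding piece; $\f$-vertex and edge spaces are all copies of $\R^{n+1}$; and gluings across $\f$-vertices use the flip conventions of Section~2.1. Given a weak covering $f\co \Gamma(K')\to \Gamma(K)$, construct $\Phi\co \tilde X_{K'}\to \tilde X_K$ one vertex space at a time: on each $\p$-vertex space of $\tilde X_{K'}$ with $k'\ge k$ branches covering a target $\p$-vertex space with $k$ branches, take a Lipschitz retraction of $T_{k'}\times\R^n$ onto $T_k\times\R^n$ that folds the excess tree branches onto target branches using the edge-surjection at $f(v)$, and is the identity on the $\R^n$ factor; at each $\f$-vertex, use an isometry of $\R^{n+1}$ compatible with the flip. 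Surjectivity on edges in the weak covering allows these local choices to be made so that they agree on shared $\R^{n+1}$ gluing faces.

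I expect the main obstacle to be this consistency across gluings: the flip structure at each $\f$-vertex permutes the $n+1$ circle factors of the bounding torus, so the branch-folding choices on adjacent pieces must be compatible with that permutation on both sides of the face; this reduces to a careful application of edge-surjectivity of the weak covering at both $\p$- and $\f$-vertices. Once a consistent $\Phi$ is in place, coarse Lipschitzness follows from finiteness of piece types in $K$ and $K'$, coarse surjectivity follows from edge-surjection in the weak covering, and a quasi-inverse is produced by a symmetric construction, using Proposition~\ref{prop:bicolor tree} to embed both $\tilde X_K$ and $\tilde X_{K'}$ into the space built from a common weak cover so that each sits in a bounded neighborhood of a model of the other.
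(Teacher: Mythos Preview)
Your reduction to a single weak covering is fine in spirit (though note the minimal graph in the bisimilarity class need not be a tree, hence need not equal $\Gamma(K'')$ for any $K''\in\TTn$; one should instead argue directly that any two $K,K'$ whose graphs weakly cover a common target give quasi-isometric groups). The real gap is in the map $\Phi$ you build. A ``Lipschitz retraction that folds the excess tree branches'' of $T_{k'}$ onto $T_k$ is not coarsely injective on a single piece, so $\Phi$ is not a quasi-isometry. Your proposed repair---a quasi-inverse ``by a symmetric construction''---is not actually available: a weak covering $\Gamma(K')\to\Gamma(K)$ gives no map the other way, and Proposition~\ref{prop:bicolor tree} produces an infinite tree weakly covering both, hence maps \emph{from} its associated space rather than embeddings \emph{into} it. So no quasi-inverse has been constructed.

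The paper proceeds differently: it builds an honest bilipschitz \emph{homeomorphism} $\tilde X_K\to\tilde X_{K'}$ piece by piece. The observation you are missing is that all the universal covers $\tilde S_i$ are already bilipschitz homeomorphic to one another regardless of the number of punctures, so no folding is needed---one wants a color-respecting bilipschitz homeomorphism on each piece, not a retraction. What makes the outward induction work without the constants blowing up is the controlled extension result quoted as Theorem~\ref{th:BN} (Theorem~1.3 of \cite{BehrstockNeumann:qigraph}): any color-preserving $L'$--bilipschitz map on one boundary line of a surface piece extends to a bilipschitz homeomorphism of the whole piece which is $L$--bilipschitz on every \emph{other} boundary, with $L$ independent of $L'$. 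This uniform $L$ on the outer boundaries, combined with the flip (which swaps base and fiber directions at each gluing), is exactly what allows the construction to propagate through the Bass--Serre tree with bounded constants; your sketch has no substitute for this control, so even if you replaced folding by piecewise bilipschitz maps you would still owe an argument that the constants do not compound.
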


\begin{proof}[Proof of Theorem~\ref{ntrees bisim implies qi}]
  Fix a pair of complexes $K,K'\in \mathcal T_n$ for which $\Gamma(K)$
  and $\Gamma(K')$ are bisimilar and let $\Gamma$ denote the minimal
  graph in this bisimilarity class.
    
  Each group $A_K$ and $A_{K'}$ is represented as the
  fundamental group of the generalized ``graph space'' $X_K$ and
  $X_{K'}$ (it need not be a manifold, since it has up to $(n+1)$
  pieces glued together along each gluing torus), and is thus
  quasi-isometric to the universal cover of this space. Below we
  follow the same induction as in the proof of
  \cite[Theorem~3.2]{BehrstockNeumann:qigraph} to show that the
  universal covers of $X_{K}$ and $X_{K'}$ are bilipschitz
  homeomorphic, implying the quasi-isometry of $A_K$ and $A_{K'}$.

  The universal cover of a piece of $X_K$ or $X_{K'}$ is identified
with
  $\tilde{S_i}\times \R^{n}$, where $S_i$ is one of a finite
  collection of compact riemannian surfaces with boundary (each of
  these is a sphere minus a finite number, at least three, of open
  disks). Note that these $S_{i}$ all have bilipschitz homeomorphic
  universal covers.

Let $X_{0}$ denote the universal cover of a fixed riemannian metric on
a sphere minus three disks. Let $C$ be a finite set of ``colors''. A
bounded $C$-coloring on the boundary components of $X_0$ is an
assignment of a color in $C$ to each boundary component of $X_0$ such
that every point of $X_0$ is within a uniformly
bounded distance of boundary components of every color.  Choose a
fixed
boundary component of the universal cover, denoted
$\partial_{0}X_{0}$.
The following is Theorem
1.3 of \cite{BehrstockNeumann:qigraph}.
\begin{theorem}\label{th:BN}
  For any manifold $X$ bilipschitz
homeomorphic to $X_{0}$ with a bounded $C$--coloring on the
elements of $\partial X$, there exists $L$ and a function $\phi\colon
\R\to \R$ such that for any $L'$ and any color-preserving
$L'$--bilipschitz homeomorphism $\Phi_0$ from a boundary component
$\partial_0X$ of $X$ to $\partial_0X_0$, then $\Phi_0$ extends to a
$\phi(L')$--bilipschitz homeomorphism $\Phi\colon X\to X_0$ which is
$L$--bilipschitz on every other boundary component and which is a
color-preserving map from $\partial X$ to $\partial X_{0}$.\qed
\end{theorem}

Each piece of $K$ or $K'$ is associated with some \p--vertex $v$ of
the minimal graph $\Gamma$; we then say the piece has \emph{type $v$},
and similarly for the pieces of the geometric realizations $X_K$ and
$X_{K'}$ and their universal covers $\tilde X_K$ and $\tilde X_{K'}$.
We let $C_v$ denote the set of outgoing edges at the \p--vertex $v$,
so there is a natural $C_v$--labelling of the boundary components of
any type $v$ geometric piece of $\tilde X_K$ or $\tilde X_{K'}$.

Choose a number $L$ sufficiently large so that Theorem
\ref{th:BN} applies for the universal cover of each of the $S_{i}$.
Choose a bilipschitz homeomorphism from one type $v$ piece $\tilde
S_i\times \R$ of $\tilde X_{K}$ to a type $v$ piece $X_0\times \R^{n}$
of $\tilde X_{K'}$, preserving the (surface)$\times \R^{n}$ product
structure and the $C_v$--colors of boundary components; this can
be done since the graphs are bisimilar.  We want to extend to a
neighboring piece of $\tilde X_{K}$. On the common boundary $\R\times
\R^{n}$ we have a map that is of the form $\phi_1\times \phi_2$ with
$\phi_1$ and $\phi_2$ both bilipschitz. Since $\Gamma(K)$ and 
$\Gamma(K')$ are bisimilar, each neighboring piece in $\tilde X_{K'}$ 
has the same 
label as the corresponding piece in $\tilde X_{K}$ 
and thus we can extend over each neighboring piece 
by a product map. Further, by Theorem \ref{th:BN}, we can assume this map 
preserves boundary colors and on the other boundaries of this piece 
is given by maps of the form $\phi'_1\times \phi_2$ with $\phi'_1$
$L$--bilipschitz. We do this for all neighboring pieces of our
starting piece. Because of the flip, when we extend over the next
layer we have maps on the outer boundaries that are $L$--bilipschitz
in both base and fiber. We can thus continue extending outwards
inductively to construct our desired bilipschitz map.
\end{proof}

\section{Quasi-isometries preserve the decomposition into
pieces}\label{sec:4}

As described above, the group $A_{K}$ with $K\in\TTn$ is the
fundamental group of a ``graph space'' $X_K$ whose universal cover
$\tilde X_K$ is a quasi-isometric model for $A_K$. This $\tilde X_K$
has its geometric decomposition into \emph{pieces} which overlap each
other in \emph{separating flats}. (Equivalently, the same
decomposition is given directly on $A_K$ up to quasi-isometry by
writing $A_K$ as the union of the cosets of the \p--vertex groups of
its geometric graph-of-groups decomposition.) The asymptotic cone
$\coneA$ of $A_K$ (which equals the asymptotic cone of $\tilde X_K$) 
admits a similar decomposition into subsets coming from asymptotic
cones of the pieces of $A_K$, which we 
call \emph{pieces} as well; note that the asymptotic cone of any piece
is isometric to $T\times \R^{n}$ where $T$ is a metric tree (all the
asymptotic cones we consider are taken with respect to an arbitrary,
but fixed, choice of ultrafilter and scaling constants).  Below,
we apply Kapovich--Leeb's argument that quasi-isometries preserve the
geometric decomposition of $3$-manifolds \cite{KapovichLeeb:haken}, to
the present situation.

The following lemma in the case $n=1$ was proven in 
\cite[Lemma 2.14]{KapovichLeeb:nonpc}; the same argument holds to 
prove:
\begin{lemma}
    Fix a metric tree $T$. If $f\co\R^{n+1}\to T\times \R^{n}$ is a 
    bilipschitz embedding, then the image, $f(\R^{n+1})$, is a subset 
    which is isometric to $\R^{n+1}$.\qed
\end{lemma}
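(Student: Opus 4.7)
The plan is to follow the argument of Kapovich--Leeb in the cited $n=1$ case. Decompose the embedding as $f=(g,h)$ with $g=\pi_T\circ f\co\R^{n+1}\to T$ and $h=\pi_{\R^n}\circ f\co\R^{n+1}\to\R^n$, both Lipschitz. The lemma then reduces to the following:

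\emph{Main claim.} The image $g(\R^{n+1})$ is contained in a single geodesic line $\ell\subset T$.

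Granting the main claim, the rest of the proof is short: $f$ becomes a bilipschitz embedding of $\R^{n+1}$ into $\ell\times\R^n$, and the latter is isometric to $\R^{n+1}$. Any bilipschitz self-embedding of $\R^{n+1}$ has image both open (by invariance of domain) and closed (since bilipschitz embeddings are proper and hence have closed image), so it must be surjective. Therefore $f(\R^{n+1})=\ell\times\R^n$, which is isometric to $\R^{n+1}$, as required.

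To establish the main claim I would argue by contradiction. If $g(\R^{n+1})$ is not contained in any geodesic of $T$, then there exist $x_1,x_2,x_3\in\R^{n+1}$ such that $g(x_1),g(x_2),g(x_3)$ span a non-degenerate tripod with center $q\in T$. Since $\R^{n+1}$ is connected, so is $g(\R^{n+1})$; and a connected subset of a tree containing three points contains the entire tripod they span. In particular $q\in g(\R^{n+1})$, so we may choose $x^*\in g^{-1}(q)$. A neighborhood of $f(x^*)=(q,h(x^*))$ in $T\times\R^n$ is then isometric to a wedge of at least three half-spaces $\R_{\ge 0}\times\R^n$ glued along their common $\R^n$-boundary $\{0\}\times\R^n$.

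The main obstacle is the local topological argument at $f(x^*)$ that rules out a bilipschitz image of $\R^{n+1}$ passing through such a branching configuration. One needs to show that $f(\R^{n+1})$ locally enters at least three of these half-space pages near $f(x^*)$; then the image fails to be a topological $(n+1)$-manifold there, contradicting the fact that $f$ is a bilipschitz (hence topological) embedding of $\R^{n+1}$. For $n=1$ Kapovich--Leeb handle this via a combination of invariance of domain applied page-by-page together with an intermediate-value / connectivity argument across $q$. I expect the same argument to adapt verbatim: the extra $\R^{n-1}$ factors in the target enter only as product directions and play no role in the branching analysis, so the invariance-of-domain (or local-homology) obstruction at $f(x^*)$ is independent of $n$.
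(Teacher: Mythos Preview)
Your proposal is in the same spirit as the paper, which gives no argument at all beyond asserting that ``the same argument holds'' as in Kapovich--Leeb's $n=1$ case; so in that sense you have done strictly more than the paper does, and your reduction to the Main Claim together with the invariance-of-domain plus properness step is correct and is exactly the shape of the Kapovich--Leeb proof.

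One caution about the sketch of the Main Claim. Choosing a single preimage $x^*\in g^{-1}(q)$ and trying to derive a local contradiction at $f(x^*)$ is not quite enough as stated: there is no reason the image should enter three pages \emph{near that particular} $f(x^*)$, even though globally it meets three branches. The Kapovich--Leeb argument is more global. One observes that $g^{-1}(q)=f^{-1}(\{q\}\times\R^n)$ is a closed subset of $\R^{n+1}$ which, via $h$, is bilipschitz to a subset of $\R^n$; and that the three nonempty open sets $U_i=g^{-1}(A_i)$ (preimages of the three branches $A_i$ of $T\setminus\{q\}$) are separated by $g^{-1}(q)$. The contradiction then comes from showing that a closed set bilipschitz to a subset of $\R^n$ cannot disconnect $\R^{n+1}$ into three or more pieces --- this is where the page-by-page invariance of domain enters, and it does generalize verbatim from $n=1$ since, as you say, the extra Euclidean factor is inert. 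So your expectation is right, but the contradiction is produced by a separation argument rather than by finding a single bad point.
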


An immediate corollary of this lemma is that any subset of 
$\coneA$ which is contained in the asymptotic cone of 
one of the pieces and which is bilipschitz to $\R^{n+1}$ 
must actually be an isometrically embedded flat.

In a similar direction, the following also holds as in 
\cite[Lemma 3.3]{KapovichLeeb:haken}:
\begin{lemma}
    Let $T$ be a geodesically complete tree and $C\subseteq \R^{n}$ a 
    closed subset. If $f\co C\to T\times \R^{n}$ is a 
    bilipschitz embedding whose image separates, then $C=\R^{n}$ 
    and the projection of the image to $T$ is contained in a segment
with 
    no branch point in its interior. In particular, if $T$ branches 
    everywhere, then the image is a fiber $\{t\}\times\R^{n}$.\qed
\end{lemma}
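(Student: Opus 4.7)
The plan is to adapt the strategy of Kapovich--Leeb's \cite[Lemma 3.3]{KapovichLeeb:haken}, which handles the case $n=1$, to the present higher-dimensional setting. The argument naturally breaks into three steps.

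First, to show $C = \R^n$, suppose toward a contradiction that $U := \R^n \setminus C$ is nonempty and open. The aim is to produce, for any two points $p, q \in (T\times\R^n)\setminus f(C)$, a path joining them that avoids $f(C)$, contradicting separation. The bilipschitz control on $f$ means the ``hole'' $U$ in $C$ lifts to a quantitatively-sized region of $T\times\R^n$ (a complement of a tubular region around $f(C)$ whose scale is bounded below in terms of the Lipschitz constant of $f^{-1}$) through which the needed detour can be routed.

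Second, writing $A := \pi_T(f(\R^n)) \subseteq T$ for the projection, we show $A$ is contained in a segment of $T$ with no branch point in its interior. Suppose for contradiction that some branch point $t_0$ of $T$ lies in the interior of $A$, so that $A$ meets at least three distinct branches $e_1, e_2, e_3$ of $T$ at $t_0$. Choose a point $q_0 = (t_0, y_0) \in f(\R^n)$ and nearby points $q_i = (t_i, y_i) \in f(\R^n)$ with $t_i \in e_i$ close to $t_0$. Since $f(\R^n)$ is a topological $\R^n$ (being bilipschitz to $\R^n$), its local intersection with a neighborhood of $(t_0, y_0)$ is a topological $n$-ball. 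On the other hand, its projection to $T$ visits all three local branches at $t_0$, and the fibers over distinct branches are disjoint outside $\{t_0\}\times\R^n$; this forces the local picture to be three $n$-dimensional sheets joined along an $(n-1)$-dimensional central spine in $\{t_0\}\times\R^n$, which is not a topological $n$-manifold. This contradicts the local manifold structure of $f(\R^n)$.

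Finally, combining these, $C = \R^n$ and $A \subseteq [a,b]$ with no branch point in $(a,b)$. When $T$ branches everywhere, every non-degenerate segment has a branch point in its interior, forcing $a=b$. Hence $f(\R^n) \subseteq \{a\}\times\R^n$, and a bilipschitz embedding of $\R^n$ into itself is surjective by invariance of domain, so $f(\R^n) = \{a\}\times\R^n$. The main obstacle will be the second step, specifically verifying that the three sheets are genuinely $n$-dimensional and meet in an $(n-1)$-dimensional spine; this requires leveraging the separation hypothesis and the bilipschitz constraint simultaneously to rule out degenerate sheet configurations (e.g.\ in which one ``sheet'' is lower-dimensional or the central spine is $n$-dimensional).
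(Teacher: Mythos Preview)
The paper gives no independent proof here---it simply asserts that the lemma ``holds as in \cite[Lemma 3.3]{KapovichLeeb:haken}'' and places a \qed. Your proposal, which adapts the Kapovich--Leeb $n=1$ argument step by step and honestly flags where the real work remains in Step~2, is therefore exactly the route the paper intends.
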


The arguments of \cite{KapovichLeeb:haken} then apply 
to show that any bilipschitz embedding of a tree cross $\R^{n}$ into 
$\coneA$ must lie inside a piece, which then implies:

\begin{proposition} Let $K,K'\in \TTn$ and let $\coneA,\coneA'$ denote
  asymptotic cones of $A_K, A_{K'}$. Let $\phi\co\coneA\to\coneA'$
  be a bilipschitz homeomorphism. Then $\phi$ sends pieces to pieces
  and separating flats to separating flats.\qed
\end{proposition}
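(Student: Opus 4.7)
My plan is to deduce the proposition from the assertion made in the paragraph immediately preceding it: any bilipschitz embedding of a tree cross $\R^{n}$ into $\coneA$ must lie inside a single piece. Granted that, the proposition splits cleanly into two steps: first show $\phi$ sends pieces to pieces, and then deduce that it sends separating flats to separating flats.

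For the first step I would take a piece $P$ of $\coneA$, which by construction is isometric to $T\times \R^{n}$ for some geodesically complete tree $T$ (the asymptotic cone of the universal cover of the underlying surface). Restricting $\phi$ gives a bilipschitz embedding of $T\times \R^{n}$ into $\coneA'$, so by the cited assertion $\phi(P)\subseteq P'$ for some piece $P'$ of $\coneA'$. Applying the symmetric reasoning to $\phi^{-1}|_{P'}$ yields a piece $P''$ of $\coneA$ with $\phi^{-1}(P')\subseteq P''$. Chaining these gives $P\subseteq \phi^{-1}(P')\subseteq P''$. Since two distinct pieces of $\coneA$ meet in at most one separating flat, which is isometric to $\R^{n+1}$ and hence contains no branching, such an intersection cannot contain the branching set of $P\cong T\times \R^{n}$. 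Therefore $P=P''$, and combining the inclusions yields $\phi(P)=P'$.

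For the second step, any separating flat $F$ of $\coneA$ is by definition the intersection $P_1\cap P_2$ of two adjacent pieces and is isometric to $\R^{n+1}$. By the first step $\phi(P_i)=P_i'$ are pieces of $\coneA'$, so $\phi(F)\subseteq P_1'\cap P_2'$. Non-emptiness of $F$ forces $P_1'$ and $P_2'$ to be adjacent and to meet in a separating flat $F'$. Thus $\phi(F)\subseteq F'$, and the symmetric argument with $\phi^{-1}$ yields the reverse inclusion, giving $\phi(F)=F'$.

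The main obstacle---where the real geometry lives---is the assertion invoked at the start of the first step, that a bilipschitz image of $T\times \R^{n}$ in $\coneA$ lies in a single piece. This is where I expect the Kapovich--Leeb argument to be adapted using the two preceding lemmas: the first forces $\R^{n+1}$ subflats inside a piece to be isometrically embedded, and the second forces separating bilipschitz images of $\R^{n}$ inside a piece with everywhere-branching tree factor to be product fibers $\{t\}\times\R^{n}$. Together these rigidify the way a tree-cross-Euclidean subset can sit across a separating flat: any image that straddled two pieces would have to be simultaneously a flat (by the first lemma) and the locus of a branching transition (since $T$ genuinely branches), which is incompatible. Once this is established, the two set-theoretic steps above complete the proof.
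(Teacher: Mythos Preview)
Your proposal is correct and follows essentially the same route as the paper: both take for granted the Kapovich--Leeb assertion that a bilipschitz embedding of $T\times\R^{n}$ into $\coneA$ lands in a single piece, and deduce the proposition from it. The paper literally offers no further argument beyond ``which then implies'', whereas you have spelled out the ping-pong containment $P\subseteq\phi^{-1}(P')\subseteq P''$ and the separating-flat step explicitly; your added observation that a piece cannot sit inside a flat because its tree factor genuinely branches is exactly the reason this works (and uses that pieces here come from surfaces with at least three boundary components, so the cone tree branches everywhere).
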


Using the \catzero structure on $\coneA$ and identical arguments as
for 
\cite[Theorem 4.6]{KapovichLeeb:haken}, one shows that any quasiflat 
which is not sublinearly close to a separating flat must diverge from 
it linearly and in particular that any quasi-isometry from $A_K$ to 
$A_{K'}$ sends flats to flats. As in \cite[Theorem 
1.1]{KapovichLeeb:haken}, this result applied in the present context  
implies the following theorem:
\begin{theorem}\label{geomstructurepreserved}
  Let $\phi\co A_K \to A_{K'}$ be a quasi-isometry. Then $\phi$
  preserves the geometric decompositions of $\tilde X_K$ and $\tilde
  X_{K'}$ in the following sense: for any geometric piece $X$ of
  $\tilde X_K$ there exists a geometric piece $X'$ of 
  $\tilde X_{K'}$ within a uniformly bounded Hausdorff 
  distance from $\phi(X)$. 
  Moreover, $\phi$ induces an isomorphism of trees dual to the 
  geometric decomposition of $\tilde X_K$ and $\tilde X_{K'}$.\qed
\end{theorem}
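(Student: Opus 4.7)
The plan is to bootstrap the asymptotic-cone statement from the preceding Proposition back to a bounded-scale statement on $\tilde X_K$ by a standard ultrafilter argument, together with the flat-preservation fact quoted just above the theorem. Concretely, the quasi-isometry $\phi$ induces a bilipschitz homeomorphism $\phi_\omega\co\coneA\to\coneA'$ for any choice of ultrafilter and rescaling constants, and by the preceding Proposition $\phi_\omega$ sends pieces to pieces and separating flats to separating flats. What remains is to promote this to the statement that $\phi$ itself sends each geometric piece (resp.\ separating flat) of $\tilde X_K$ to within uniformly bounded Hausdorff distance of a geometric piece (resp.\ separating flat) of $\tilde X_{K'}$.

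First I would handle separating flats. Each separating flat $F\subset\tilde X_K$ is an isometrically embedded copy of $\R^{n+1}$, and by the flat-preservation result just above the theorem, $\phi(F)$ is a quasiflat in $\tilde X_{K'}$ that is sublinearly close to some separating flat $F'$ (any quasiflat that is not sublinearly close to a separating flat diverges from every separating flat linearly, so the asymptotic cone of $\phi(F)$ would be a flat in $\coneA'$ not contained in any separating flat, contradicting the Proposition). Sublinear closeness in a \catzero setting then upgrades to bounded Hausdorff closeness: if $\phi(F)$ contained points arbitrarily far from $F'$, rescaling along such a sequence produces an ultralimit of $F$ whose image under $\phi_\omega$ strays from the separating flat limit of $F'$, again contradicting the Proposition. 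Uniqueness of $F'$ and uniformity of the bound follow because the quasi-isometry constants of $\phi$ and the local geometry of $\tilde X_{K'}$ are fixed, so every constant extracted from an ultralimit contradiction is independent of $F$.

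Once separating flats are matched up, the analogous argument handles pieces: for a piece $X\subset\tilde X_K$, its image $\phi(X)$ is quasi-isometrically embedded, coarsely bounded by images of the separating flats on $\partial X$, each of which lies in a bounded neighborhood of a separating flat of $\tilde X_{K'}$. The adjacency of pieces and flats is detected in the asymptotic cone by the preceding Proposition, so the boundary flats of $\phi(X)$ all bound a common piece $X'$ of $\tilde X_{K'}$, and $\phi(X)$ then lies in a uniformly bounded Hausdorff neighborhood of $X'$. Applying the same argument to $\phi^{-1}$ produces an inverse correspondence, so the assignments $X\mapsto X'$ and $F\mapsto F'$ define an isomorphism of the Bass--Serre trees dual to the geometric decompositions. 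The main obstacle is the uniformity of the Hausdorff constant across all pieces and flats simultaneously; this is exactly where the cocompactness of the action of $A_K$ on $\tilde X_K$ (combined with fixed quasi-isometry constants) is used to ensure that the bound extracted from any single ultralimit contradiction applies universally, yielding the claimed theorem.
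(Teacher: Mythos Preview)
Your proposal is correct and follows essentially the same route as the paper: the paper simply cites \cite[Theorems 4.6 and 1.1]{KapovichLeeb:haken} for the passage from the asymptotic-cone Proposition to the bounded-scale statement, and what you have written is a faithful outline of that Kapovich--Leeb argument (linear divergence of quasiflats from separating flats, ultralimit contradiction to upgrade sublinear to bounded, then matching pieces via their bounding flats and applying $\phi^{-1}$ for the inverse). Your expansion is in fact more detailed than what the paper provides.
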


To complete the ``only if'' direction of Theorem~\ref{qintrees}, we
must show the induced map of trees also preserves the \p--vertex
labelling up to a permutation of labels, since this dual tree is then
the unique labelled tree in the bisimilarity class corresponding to
the associated Artin group.  To do this, it suffices to show that if
we know the geometric decomposition of $\tilde X_K$ up to
quasi-isometry then we can tell when two \p--vertices $v$ and $v'$ of
the decomposition tree have the same color. 

The path from $v$ to $v'$
consists of alternating \p-- and \f--vertices, 
$$v=v_0, w_1, v_1,\dots, w_r,v_r=v'\,,$$ 
say. The $T^{n}$ in the geometric piece for vertex $v_0$ defines an 
$n$--dimensional
sub-flat $\R^n$ of the flat $\R^{n+1}$ in $\coneA$ corresponding to
$w_1$. As we move along the path we intersect this subflat repeatedly
with the projection to this flat of the codimension--$1$ subflats 
for the vertices $v_1, v_2, \dots$ (alternatively, this can be 
interpreted as the coarse intersection of the subflats associated to 
the centers of the respective pieces; hence this is 
quasi-isometrically invariant since the center in a piece corresponds 
to the coarse intersection of all the maximal flats).
Whenever we pass a $v_i$ of a color we have not yet seen, the
dimension of the intersection drops by $1$. Otherwise, we know we have
already seen the color along the path, and by using the same procedure
to check backwards along the path from $v_i$, we can find which vertex
had the same color.  If it was not $v_0$ we then continue the same way
along the path. In this way we either show that $v_r$ has the same
color as $v_0$, or the dimension of our subspace has reached $0$ by
the time we get to $v_r$, in which case we have seen every color along
the path.  By induction we can assume that we have already determined
which $v_i$'s that are closer together along the path have the same
color. But then, by checking forwards along the path from $v_0$ and
backwards from $v_r$ we can tell that they both have different colors
from every other $v_i$ along the path, so must have the same color.

This completes the proof of the ``only if'' direction of
Theorem~\ref{qintrees}, and since the ``if'' direction is 
Theorem~\ref{ntrees bisim implies qi}, 
Theorem~\ref{qintrees} is proved.\qed

\begin{remark}\label{remark:labels} The above argument shows that 
	the construction of labels in 
	Section~\ref{sec: Labelled graphs} could be done purely  
	geometrically.  As an example to see this, compare the 
	example in Figure~\ref{B1 variant} to example B1 in 
	Figure~\ref{Figure 2}. To see how to label the shaded piece, 
	$N$, consider the associated space $\tilde X_{B_{1}'}$, and 
	note that in this space $N$ has 
	three-dimensional intersection with the $P3$ piece and 
	two-dimensional intersection 
	with both the pieces labelled $P1$ and $P2$. 
	The intersection of the torus associated to the 
	center of $N$ with the center of $P1$ is one dimensional, 
	while its intersection with the center of $P2$ is trivial. 
	Since a path starting from 
	the $P1$ or $P3$ piece would not need to traverse all the 
	labels, whereas a path starting at the $P2$ piece would have 
	traversed the $P2$ and $P3$ labels, we thus see that 
	the $N$ must be labelled by a 1, as given by the labelling in 
	Section~\ref{sec: Labelled graphs}.
    
    \begin{figure}[ht]
	\centering
    \includegraphics[width=.7\hsize]{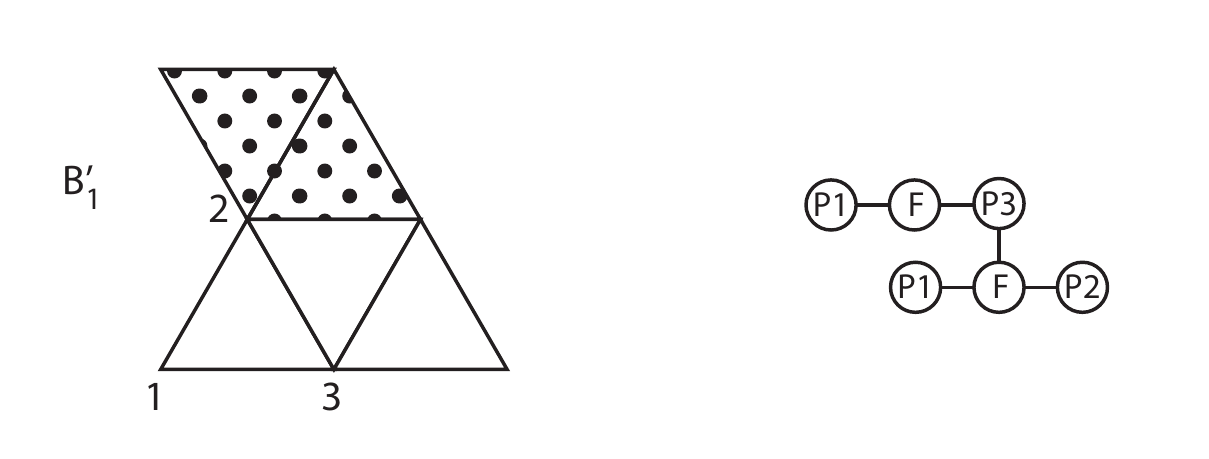}
	\caption{Compare with example B1 in 
	Figure~\ref{Figure 2}.}
	\label{B1 variant}
      \end{figure}

\end{remark}

\def\cprime{$'$}

\end{document}